\newtheorem{thm}{Theorem}
\newtheorem{lem}{Lemma}
\newcommand{\Ocal}{O}
\global\long\def\epsilon{\varepsilon} 
\DeclareMathAlphabet{\mathpzc}{OT1}{pzc}{m}{it}
\begin{document}
\date{}

%\title{\bf Notes on a paper of Y. Sui and D. Liu  }
\title{\bf On the weighted average number of subgroups of ${\mathbb {Z}}_{m}\times {\mathbb {Z}}_{n}$
with $mn\leq x$}   
\author{Isao Kiuchi and Sumaia Saad Eddin}

\maketitle
{\def\thefootnote{}
\footnote{{\it Mathematics Subject Classification 2010: 11A25, 11N37, 11Y60.\\ 
Keywords: Number of subgroups; Number of cyclic subgroups; Dirichlet series; Divisor function.}}

\begin{abstract}
Let $\mathbb{Z}_{m}$ be the additive group of residue classes modulo $m$. For any positive integers $m$ and $n$, let  $s(m,n)$ and $c(m,n)$
denote the total number of subgroups and cyclic subgroups of the group ${\mathbb{Z}}_{m}\times {\mathbb{Z}}_{n}$, respectively. 
Define
$$ 
\widetilde{D}_{s}(x) = \sum_{mn\leq x}s(m,n)\log\frac{x}{mn}
\ \ \ \ {\rm{and}} \ \ \  \
\widetilde{D}_{c}(x) = \sum_{mn\leq x}c(m,n)\log\frac{x}{mn}. 
$$ 
In this paper, we study the asymptotic behaviour of functions 
$\widetilde{D}_{s}(x)$ and  $\widetilde{D}_{c}(x)$. 
\end{abstract}

%%%%%%%%%%%%%%%%%%%%%%%%%%%%%%%%%%%%%%%%%%%%%%%%%%%%%%%%%%%%%%%%%%%%%%%%%
\section{Introduction and main result}

%%%%%%%%%%%%%%%%%%%%%%%%%%%%%%%%%%%%%%%%%%%%%%%%%%%%%%%%%%%%%%%%%%%%%%%%%%%%%%%%%%%%%%%%%%%%%%%%%%%%%%%%%%%%%%%%%

Let $\mathbb{Z}_{m}$ be the additive group of residue classes modulo $m$. Let $\mu$, $\tau$ and $\phi$ be the M\"{o}bius function, the divisor function and the Euler totient function, respectively. For any positive integers $m$ and $n$,    
$s(m,n)$ and $c(m,n)$
denote the total number of subgroups and cyclic subgroups of ${\mathbb{Z}}_{m}\times {\mathbb{Z}}_{n}$, respectively. The properties of the subgroups of the group ${\mathbb{Z}}_{m}\times {\mathbb{Z}}_{n}$ were studied by Hampejs, Holighaus, T\'{o}th and Wiesmeyr in \cite{HHTW}. We recall that $\gcd(m,n)$ is the greatest common divisor  of $m$ and $n$. The authors deduced formulas for $s(m, n)$ and $c(m, n)$, using a simple elementary method. They showed that
\begin{align*}                                                                  
s(m,n) &= \sum_{a|m, b|n}\gcd(a, b)= \sum_{d|\gcd(m,n)}\phi(d)\tau \left(\frac{m}{d}\right) \tau\left(\frac{n}{d}\right)\\
&=
\sum_{d|\gcd(m,n)}d\tau \left(\frac{mn}{d^2}\right),
\end{align*}
and 
\begin{align*}                     
c(m,n) &= \sum_{a|m, b|n \atop \gcd(m/a, n/b)=1}\gcd(a, b)=\sum_{a|m, b|n}\phi(\gcd(a, b))\\
&= \sum_{d|\gcd(m,n)}(\mu*\phi)(d)\tau\left(\frac{m}{d}\right)\tau\left(\frac{n}{d}\right)= \sum_{d|\gcd(m,n)}\phi(d)\tau\left(\frac{mn}{d^2}\right).
\end{align*}
Here, as usual, the symbol $*$ denotes the Dirichlet convolution of two arithmetical functions $f$ and $g$ defined by $(f * g)(n) = \sum_{d \mid n} f(d) g(n / d)$, for every positive integer $n$.
Suppose $x>0$ is a real number. Define
$$
S^{(1)}(x) :=\sum_{m,n\leq x}s(m,n), \quad 
S^{(2)}(x) :=\sum_{m,n\leq x \atop \gcd(m, n)>1}s(m,n) 
$$

$$
S^{(3)}(x) :=\sum_{m,n\leq x}c(m,n), \quad 
S^{(4)}(x) :=\sum_{m,n\leq x \atop \gcd(m, n)>1}c(m,n) 
$$
The functions $S^{(2)}(x)$ and $S^{(4)}(x)$ represent the number of total subgroups, and cyclic subgroups of ${\mathbb{Z}}_{m}\times {\mathbb{Z}}_{n}$, respectively, having rank two, with $m, n\leq x$. 
W.G. Nowak and L. T\'{o}th~\cite{NT} studied the above functions and proved that   
\begin{align*}                                                                 
S^{(j)}(x)=x^{2}\left(\sum_{r=0}^{3}A_{j,r}(\log x)^{r}\right) + O\left(x^{\frac{1117}{701}+\varepsilon}\right), 
\end{align*}
where $A_{j,r}\ (1\leq j \leq 4,\ 0\leq r \leq 3)$ are computable constants. Moreover, they showed that the double Dirichlet series of the functions $s(m,n)$ and $c(m,n)$ can be represented by the Riemann zeta function. 
Later, the above error term has been improved by T\'{o}th and  Zhai \cite{TZ} to  
$
O\left(x^{\frac{3}{2}}(\log x)^{\frac{13}{2}}\right). 
$

More recently, Sui and Liu \cite{SL} considered the sum of $s(m,n)$ and of $c(m,n)$ in the Dirichlet region $\{(m,n):\ m, n\leq x\}$. Define
$$
D_{s}(x) := \sum_{mn\leq x}s(m,n)
\ \ \ {\rm{and}} \ \ \  
D_{c}(x) := \sum_{mn\leq x}c(m,n),
$$
the authors obtained two asymptotic formulas of $D_{s}(x) $ and $D_{c}(x)$ by using the method of exponential sums. They proved that 
\begin{align*}                          
D_{s}(x) = xP_{4}(\log x) + O\left(x^{2/3}(\log x)^6 \right)
\end{align*}
and 
\begin{align*}                      
D_{c}(x) = xR_{4}(\log x) + O\left(x^{2/3}(\log x)^6 \right),
\end{align*}
where $P_{4}(u)$ and  $R_{4}(u)$ are polynomials in $u$ of degree $4$ with the leading coefficients $1/(8\pi^2)$ and $3/(4\pi^4)$, respectively. Put
$$
\Delta_s(x):=D_{s}(x) - xP_{4}(\log x), \quad 
\Delta_c(x):=D_{c}(x) -xR_{4}(\log x),
$$
Sui and Liu also studied the upper bound of the mean-square estimate of $\Delta_s(x)$ and $\Delta_c(x)$ and guessed that $\Delta_s(x), \Delta_c(x)\ll x^{41/72+\epsilon}$ hold on average. 
Moreover, they conjectured that 
$\Delta_s(x), \Delta_c(x)\ll x^{1/2+\epsilon}.$\\

In this paper, we study the weighted average of $s(m,n)$  and $c(m,n)$ with weight concerning logarithms. Let   
$$ 
\widetilde{D}_{s}(x) = \sum_{mn\leq x}s(m,n)\log\frac{x}{mn}
\ \ \ \ {\rm{and}} \ \ \  \
\widetilde{D}_{c}(x) = \sum_{mn\leq x}c(m,n)\log\frac{x}{mn},
$$ 
then, we have the following results. 
%%%%%%%%%%%%%%%%%%%%%%%%%%%%%%%%%%%%%%%%%%%%%%%%%%%%%%%%%%%%%%%%%%%%%%%%%%%%
 \begin{thm}      
 \label{th1}
Let the notation be as above.  
For any positive real number $x>2$, we have                                
\begin{align}                          \label{KS-11}
\widetilde{D}_{s}(x) = x\widetilde{P}_{4}(\log x) + O\left(x^{\frac{1}{2}}\log^{}x\right),
\end{align}
and
\begin{align}                           \label{KS-21}
\widetilde{D}_{c}(x) = x\widetilde{R}_{4}(\log x) + O\left(x^{\frac{1}{2}}\log^{}x\right),
\end{align}
 where $\widetilde{P}_{4}(u)$ and  $\widetilde{R}_{4}(u)$ are polynomials in $u$ of degree $4$ with computable coefficients.    
\end{thm}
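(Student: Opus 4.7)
The plan is a standard Mellin--Perron argument, refined by the presence of the logarithmic weight. First, starting from the identities $s(m,n)=\sum_{d\mid\gcd(m,n)}d\,\tau(mn/d^2)$ and $c(m,n)=\sum_{d\mid\gcd(m,n)}\phi(d)\,\tau(mn/d^2)$ recalled in the introduction, and combining them with the classical identity $\sum_{m,n\ge 1}\tau(mn)/(mn)^s=\zeta(s)^4/\zeta(2s)$ (valid for $\Re(s)>1$), I would first establish
\[
F_s(s):=\sum_{m,n\ge 1}\frac{s(m,n)}{(mn)^s}=\frac{\zeta(s)^4\,\zeta(2s-1)}{\zeta(2s)},\qquad F_c(s):=\sum_{m,n\ge 1}\frac{c(m,n)}{(mn)^s}=\frac{\zeta(s)^4\,\zeta(2s-1)}{\zeta(2s)^2}.
\]
Perron's formula in its log-weighted form then gives the exact identity
\[
\widetilde{D}_s(x)=\frac{1}{2\pi i}\int_{c-i\infty}^{c+i\infty}F_s(s)\,\frac{x^s}{s^2}\,ds \qquad (c>1),
\]
and likewise for $\widetilde{D}_c(x)$. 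The extra factor $s^{-1}$ (compared with the ordinary Perron formula) is what makes the shifted integral well-behaved on vertical lines deep in the critical strip.

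Next I would move the contour to $\Re(s)=1/2$. Both integrands have a unique pole in the strip $1/2\le\Re(s)\le c$, located at $s=1$ and of order $5$ (order $4$ from $\zeta(s)^4$, order $1$ from $\zeta(2s-1)$): the nontrivial zeros of $\zeta(2s)$ all lie in $0<\Re(s)<1/2$ and are avoided. The residue is computed by expanding the four analytic ingredients $\zeta(s)^4$, $\zeta(2s-1)$, $\zeta(2s)^{-k}$ ($k=1,2$) and $x^s/s^2$ as Laurent/Taylor series at $s=1$ and reading off the coefficient of $(s-1)^{-1}$ in their product; this yields a main term of the shape $x\widetilde{P}_4(\log x)$ (resp.\ $x\widetilde{R}_4(\log x)$), where $\widetilde{P}_4$ and $\widetilde{R}_4$ are polynomials of degree $4$ in $\log x$ with explicitly computable coefficients depending on $\zeta(0)$, $\zeta'(1)$ at $2$, the Stieltjes constants, and so on.

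The main obstacle is to bound the remaining integral
\[
\frac{1}{2\pi i}\int_{1/2-i\infty}^{1/2+i\infty}F_s(s)\,\frac{x^s}{s^2}\,ds
\]
and its counterpart for $F_c$ by $O(x^{1/2}\log x)$. I would truncate at height $T$, treat the two horizontal segments by the convexity bound (they are absorbed into the target error for $T$ a suitable power of $x$), and estimate the vertical piece dyadically: on each block $T\le|t|\le 2T$ combine the Ingham fourth-moment bound $\int_T^{2T}|\zeta(1/2+it)|^4\,dt\ll T\log^4 T$, the functional-equation bound $|\zeta(2it)|\ll|t|^{1/2}\log|t|$, the de~la Vall\'ee Poussin bound $|\zeta(1+2it)|^{-1}\ll\log|t|$, and the decay $|s|^{-2}\asymp t^{-2}$. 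These multiply to a contribution $\ll x^{1/2}T^{-1/2}\log^{6}T$ per dyadic block, which is summable in $T$ and yields the claimed error term. The case of $\widetilde{D}_c(x)$ is essentially identical, since the extra factor $\zeta(2s)^{-1}$ is holomorphic and bounded on $\Re(s)=1/2$, so $F_c$ satisfies the same mean-value estimates on the critical line as $F_s$.
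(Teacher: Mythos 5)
Your proposal is correct and follows essentially the same route as the paper: the log-weighted Perron formula with kernel $x^{s}/s^{2}$, a contour shift to $\Re(s)=1/2$ picking up the order-$5$ residue at $s=1$, convexity bounds on the horizontal segments, and a dyadic fourth-moment estimate on the critical line. The only (harmless) deviation is that you bound the vertical integral via Ingham's fourth moment together with the pointwise bound $1/\zeta(1+2it)\ll\log t$ (costing one extra logarithm per dyadic block, which the geometric decay absorbs), whereas the paper invokes the Jia--Sankaranarayanan estimate $\int_{1}^{T}|\zeta(1/2+it)|^{4}|\zeta(1+2it)|^{-1}\,dt\ll T(\log T)^{4}$ directly.
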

%%%%%%%%%%%%%%%%%%%%%%%%%%%%%%%%%%%%%%%%%%%%%%%%%%%%%%%%%%%%%%%%%%%%%%%%%%%%%%%%%%%%%%%%%%%%%%%%%%%%%%%%%%%%%%%%%%%%

\section{Auxiliary results}

In order to prove our main result, we first show some necessary lemmas.

%%%%%%%%%%%%%%%%%%%%%%%%%%%%%%%%%%%%%%%%%%%%%%%%%%%%%%%%%%%%%%%%%%%%%%%%%%%%%%
\begin{lem}                                                            
\label{lem10}
For every $z, w \in \mathbb{C}$ with $\Re({z}), \Re({w}) >1$, we have  
\begin{align}                                                               \label{lem11}
\sum_{m,n=1}^{\infty}\frac{s(m,n)}{m^{z}n^{w}} 
        = \frac{\zeta^{2}(z)\zeta^{2}(w)\zeta(z+w-1)}{\zeta(z+w)},    
\end{align}
and  
\begin{align}                          \label{lem12}
\sum_{m,n=1}^{\infty}\frac{c(m,n)}{m^{z}n^{w}} 
        = \frac{\zeta^{2}(z)\zeta^{2}(w)\zeta(z+w-1)}{\zeta^{2}(z+w)}. 
\end{align}
\end{lem}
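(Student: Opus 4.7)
The plan is to derive both double Dirichlet series by direct manipulation of the closed-form expressions for $s(m,n)$ and $c(m,n)$ already recorded in the introduction. Specifically, I would use the representations
\[
s(m,n) = \sum_{d\mid\gcd(m,n)} \phi(d)\,\tau(m/d)\,\tau(n/d),
\qquad
c(m,n) = \sum_{d\mid\gcd(m,n)} (\mu*\phi)(d)\,\tau(m/d)\,\tau(n/d),
\]
substitute them into the defining double series, and interchange the order of summation. This interchange is legitimate in the region $\Re(z),\Re(w)>1$ because all coefficients are nonnegative (or are bounded in absolute value by those of the $s(m,n)$ series), so absolute convergence holds.

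After interchanging, I would perform the change of variables $m=da$, $n=db$, which factorises each double sum as
\[
\sum_{d=1}^{\infty}\frac{f(d)}{d^{z+w}}\cdot\sum_{a=1}^{\infty}\frac{\tau(a)}{a^{z}}\cdot\sum_{b=1}^{\infty}\frac{\tau(b)}{b^{w}},
\]
where $f=\phi$ for the $s(m,n)$ series and $f=\mu*\phi$ for the $c(m,n)$ series. The inner two sums are standard and evaluate to $\zeta^{2}(z)$ and $\zeta^{2}(w)$ respectively.

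To finish, I would insert the classical Dirichlet series evaluations
\[
\sum_{d=1}^{\infty}\frac{\phi(d)}{d^{s}}=\frac{\zeta(s-1)}{\zeta(s)},
\qquad
\sum_{d=1}^{\infty}\frac{(\mu*\phi)(d)}{d^{s}}=\frac{1}{\zeta(s)}\cdot\frac{\zeta(s-1)}{\zeta(s)}=\frac{\zeta(s-1)}{\zeta^{2}(s)},
\]
at $s=z+w$. Substituting into the factorisation above yields \eqref{lem11} and \eqref{lem12} respectively.

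Honestly, there is no real obstacle here: the whole argument is a routine manipulation of Dirichlet series, and the only point requiring a brief justification is the interchange of summations, which is immediate from absolute convergence in $\Re(z),\Re(w)>1$ (and can be seen directly by comparing with the known convergence of $\sum_{m,n}\tau(m)\tau(n)\gcd(m,n)/(m^{z}n^{w})$ on this half-plane). The lemma is essentially a packaging step that encodes the multiplicative structure of $s(m,n)$ and $c(m,n)$ into zeta factors, to be exploited in the subsequent Perron-type contour argument that will establish Theorem~\ref{th1}.
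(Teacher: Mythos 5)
Your derivation is correct: substituting the convolution identity $s(m,n)=\sum_{d\mid\gcd(m,n)}\phi(d)\tau(m/d)\tau(n/d)$ (and its analogue with $\mu*\phi$ for $c(m,n)$), reparametrising $m=da$, $n=db$, and factoring the resulting triple sum gives exactly $\zeta^{2}(z)\zeta^{2}(w)\sum_{d}f(d)d^{-(z+w)}$ with $f=\phi$ or $f=\mu*\phi$, and the classical evaluations $\sum_{d}\phi(d)d^{-s}=\zeta(s-1)/\zeta(s)$ and $\sum_{d}(\mu*\phi)(d)d^{-s}=\zeta(s-1)/\zeta^{2}(s)$ apply since $\Re(z+w)>2$. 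The absolute-convergence justification for the interchange is also sound (all terms are nonnegative for $s(m,n)$, and $0\le c(m,n)\le s(m,n)$). The only difference from the paper is that the paper gives no proof at all here: it simply cites Theorem~1 of Nowak and T\'{o}th \cite{NT}, where essentially this same computation is carried out. So your argument is not a different route so much as a self-contained writeup of the standard one; it has the minor virtue of making the lemma independent of the reference, at the cost of a few lines.
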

%%%%%%%%%%%%%%%%%%%%%%%%%%%%%%%%%%%%%%%%%%%%%%%%%%%%%%%%%%%%%%%%%%%%%%%%%%%%%%
\begin{proof} 
The proof can be found in \cite[Theorem~1]{NT}. 
\end{proof}
%%%%%%%%%%%%%%%%%%%%%%%%%%%%%%%%%%%%%%%%%%%%%%%%%%%%%%%%%%%%%%%%%%%%%%%%%%%%
%%%%%%%%%%%%%%%%%%%%%%%%%%%%%%%%%%%%%%%%%%%%%%%%%%%%%%%%%%%%%%%%%%%%%%%%%%%%%%%%%%%%%%%%%%%%%%%%%%%%%%%%%%%%%%%%%%%%%%%%%%%%%%%%%%%%%%%%%%%%%%%%%%%%%%%%%%%%%
\begin{lem}               
\label{lem21}
For $t\geq t_0>0$ uniformly in $\sigma$, we have 
\begin{align*}
\zeta(\sigma+it)
&\ll \left\{\begin{array}{cl}
t^{\frac16 (3-4\sigma)}\log t & \left(0\leq \sigma \leq \frac12\right), \ \smallskip \\
t^{\frac13 (1-\sigma)} \log t & \left(\frac12 \leq \sigma \leq 1\right).
\end{array} \right.
\end{align*}
Moreover, for $\sigma>1$ we have  
\begin{align*}
&\zeta(\sigma+it)
\ll \min \left( \frac{1}{\sigma-1},\  \log (|t|+2)\right)\\ 
&
\zeta^{-1}(\sigma+it)
\ll \min \left( \frac{1}{\sigma-1}, \ \log (|t|+2)\right).
\end{align*}
\end{lem}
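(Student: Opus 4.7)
The plan is to establish these as standard convexity-type bounds for the Riemann zeta function, by combining (i) direct estimates in the region of absolute convergence, (ii) the Phragmén--Lindelöf principle inside the critical strip, and (iii) the functional equation to reach the left half of the strip.

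First I would handle $\sigma > 1$. The bound $\zeta(\sigma+it) \ll 1/(\sigma-1)$ follows from $|\zeta(\sigma+it)| \leq \sum_{n\geq 1} n^{-\sigma} \leq 1 + \int_1^\infty u^{-\sigma}\,du$. The bound $\zeta(\sigma+it) \ll \log(|t|+2)$ is obtained by truncating the Dirichlet series at $N \asymp |t|+2$ and applying Euler--Maclaurin (or partial summation) to the tail, absorbing the contribution into $\log(|t|+2)$; the minimum of the two then gives the stated estimate. For $1/\zeta(\sigma+it)$, in the range $\sigma \geq 1 + 1/\log(|t|+2)$ absolute convergence of $\sum \mu(n) n^{-s}$ suffices, while just below this line one invokes the classical zero-free region together with the lower bound $|\zeta(\sigma+it)| \gg 1/\log(|t|+2)$.

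Inside the strip $\tfrac12 \leq \sigma \leq 1$, the target is the standard convexity bound. The two endpoint inputs are the classical estimate $\zeta(1+it) \ll \log t$ and the Hardy--Littlewood/Weyl bound $\zeta(\tfrac12+it) \ll t^{1/6}\log t$ obtained from the approximate functional equation combined with van der Corput's exponential-sum method. Applying Phragmén--Lindelöf on this strip, to $\zeta(s)$ divided by a suitable polynomial-in-$s$ smoothing factor that absorbs the logarithmic factors, interpolates linearly in $\sigma$ to yield $\zeta(\sigma+it) \ll t^{(1-\sigma)/3}\log t$ uniformly.

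Finally, for $0 \leq \sigma \leq \tfrac12$, I would invoke the functional equation $\zeta(s) = \chi(s)\,\zeta(1-s)$ with $|\chi(\sigma+it)| \asymp (t/2\pi)^{1/2-\sigma}$ from Stirling's formula. Since $1-\sigma \in [\tfrac12, 1]$, the previous step gives $\zeta(1-\sigma-it) \ll t^{\sigma/3}\log t$, and multiplication produces $\zeta(\sigma+it) \ll t^{1/2-\sigma+\sigma/3}\log t = t^{(3-4\sigma)/6}\log t$, as required. The main obstacle is the depth of the input at $\sigma = \tfrac12$: the bound $\zeta(\tfrac12+it) \ll t^{1/6}\log t$ is the classical but nontrivial Weyl/van der Corput estimate that one would ordinarily cite rather than reprove, while the remaining ingredients (functional equation, Phragmén--Lindelöf, Dirichlet-series tail estimates) are routine.
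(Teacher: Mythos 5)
Your proposal is correct in substance, but it is worth noting that the paper gives no argument at all for Lemma~\ref{lem21}: the subconvexity estimates are quoted from Tenenbaum (Theorem~II.3.8) and the bounds for $\zeta$ and $\zeta^{-1}$ on $\sigma>1$ from Sui and Liu. What you sketch is essentially the standard derivation sitting behind those citations: Dirichlet-series and Euler--Maclaurin estimates for $\sigma>1$, the Weyl--van der Corput bound $\zeta(\tfrac12+it)\ll t^{1/6}\log t$ interpolated by Phragm\'en--Lindel\"of to $t^{(1-\sigma)/3}\log t$ on $\tfrac12\le\sigma\le1$, and the functional equation with $|\chi(\sigma+it)|\asymp t^{1/2-\sigma}$ to transfer this to $t^{(3-4\sigma)/6}\log t$ on $0\le\sigma\le\tfrac12$; your exponent bookkeeping $\tfrac12-\sigma+\tfrac{\sigma}{3}=\tfrac{3-4\sigma}{6}$ checks out. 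The one step you should state more carefully is $\zeta^{-1}(\sigma+it)\ll\log(|t|+2)$ in the range $1<\sigma<1+1/\log(|t|+2)$: the lower bound $|\zeta(\sigma+it)|\gg1/\log(|t|+2)$ near the line $\sigma=1$ does \emph{not} follow from the classical de la Vall\'ee Poussin zero-free region --- the usual $3$--$4$--$1$ argument there only yields $\zeta^{-1}\ll\log^{7}(|t|+2)$ --- but requires the Vinogradov--Korobov estimates (which give $\zeta^{-1}(1+it)\ll(\log|t|)^{2/3}(\log\log|t|)^{1/3}$), or else must simply be cited, as the authors do. With that attribution corrected, your argument is a complete, self-contained route to the lemma; what the paper's approach buys is brevity, and what yours buys is transparency about exactly which nontrivial inputs (Weyl's method at $\sigma=\tfrac12$, a quantitative nonvanishing near $\sigma=1$) the lemma actually rests on.
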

\begin{proof}
The first estimate follows immediately from \cite[Theorem II.3.8]{Te}.
The second and third estimates can be found in \cite{SL}.
\end{proof}
%%%%%%%%%%%%%%%%%%%%%%%%%%%%%%%%%%%%%%%%%%%%%%%%%%%%%%%%%%%%%%%%%%%%%%%%%%%%%%%%%%%%%%%%%%%%%%%%%%%%%%%%%%%%%%%%%%%%%%%%%%%%%%%%%%%%%%%%%%%%%%%%%%%%%%%%%%%%%
\begin{lem}                                          \label{lem40}
We have  
\begin{equation}                                                               \label{JS-1}
\int_{1}^{T}\frac{|\zeta(1/2+it)|^4}{|\zeta(1+2it)|}\, dt \ll T(\log T)^4. 
\end{equation} 
\end{lem}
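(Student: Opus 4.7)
The plan is to decouple the factor $|\zeta(1+2it)|^{-1}$ from the fourth power via Cauchy--Schwarz and then bound each piece by a mean-value estimate. Writing
\[
\frac{|\zeta(1/2+it)|^{4}}{|\zeta(1+2it)|} = |\zeta(1/2+it)|^{2}\cdot\frac{|\zeta(1/2+it)|^{2}}{|\zeta(1+2it)|},
\]
Cauchy--Schwarz gives
\[
\int_1^T\frac{|\zeta(1/2+it)|^{4}}{|\zeta(1+2it)|}\,dt \le \Bigl(\int_1^T|\zeta(1/2+it)|^{4}\,dt\Bigr)^{\!1/2}\Bigl(\int_1^T\frac{|\zeta(1/2+it)|^{4}}{|\zeta(1+2it)|^{2}}\,dt\Bigr)^{\!1/2}.
\]
The first factor is $\ll T^{1/2}(\log T)^{2}$ by Ingham's classical fourth-moment estimate $\int_0^T|\zeta(1/2+it)|^{4}\,dt\ll T(\log T)^{4}$, so the lemma reduces to showing
\[
\int_1^T\Bigl|\frac{\zeta^{2}(1/2+it)}{\zeta(1+2it)}\Bigr|^{2}dt \ll T(\log T)^{4}.
\]
Note that crude termwise control is inadequate here: combining $\zeta^{-1}(1+2it)\ll\log(|t|+2)$ from Lemma~\ref{lem21} with Ingham's bound would only yield $T(\log T)^{5}$, so an honest second-moment saving is required.

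The reduced estimate is obtained by exploiting the Dirichlet series identity
\[
\frac{\zeta^{2}(s)}{\zeta(2s)}=\sum_{n=1}^{\infty}\frac{2^{\omega(n)}}{n^{s}}\qquad(\Re s>1),
\]
where $\omega(n)$ counts the distinct prime factors of $n$. I would derive an approximate functional equation for $\zeta^{2}(s)/\zeta(2s)$ on the critical line, representing $\zeta^{2}(1/2+it)/\zeta(1+2it)$ as a Dirichlet polynomial of length $\asymp T$ in $n^{-1/2-it}$ plus a dual tail and a negligible remainder. The mean-value theorem for Dirichlet polynomials then yields
\[
\int_1^T\Bigl|\sum_{n\le T}\frac{2^{\omega(n)}}{n^{1/2+it}}\Bigr|^{2}dt \ll T\sum_{n\le T}\frac{4^{\omega(n)}}{n}+\sum_{n\le T}4^{\omega(n)} \ll T(\log T)^{4},
\]
using the standard bounds $\sum_{n\le x}4^{\omega(n)}\ll x(\log x)^{3}$ and $\sum_{n\le x}4^{\omega(n)}/n\ll(\log x)^{4}$, which follow from the factorization $\sum_n 4^{\omega(n)}n^{-s}=\zeta(s)^{4}H(s)$ with $H$ absolutely convergent for $\Re s>1/2$. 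Multiplying the two factors gives $\ll T^{1/2}(\log T)^{2}\cdot T^{1/2}(\log T)^{2}=T(\log T)^{4}$.

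The main technical obstacle is the approximate functional equation step. The double pole of $\zeta^{2}$ at $s=1$ contributes residues that must be shown to be negligible after division by $\zeta(2s)$, and the pole of $\zeta(2s)$ at $s=1/2$ forces the error analysis to pass through the reciprocal $1/\zeta(2s)$. Shifting the contour in the associated Perron-type integral from $\Re s=1+\delta$ down to $\Re s=1/2$ requires precisely the subconvexity bounds for $\zeta$ on the critical strip, together with the lower bound $\zeta(1+2it)\gg 1/\log(|t|+2)$ supplied by Lemma~\ref{lem21}, in order to control the vertical integrals.
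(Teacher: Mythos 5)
The paper does not actually prove this lemma: its ``proof'' is a one-line citation to Proposition~2 of Jia--Sankaranarayanan \cite{JS} with $k=1$. Your outer skeleton is sound and quite plausibly mirrors how the cited result is organized. The Cauchy--Schwarz split $|\zeta|^4/|\zeta(1+2it)| = |\zeta|^2\cdot\bigl(|\zeta|^2/|\zeta(1+2it)|\bigr)$ is applied correctly, Ingham's fourth moment gives the factor $T^{1/2}(\log T)^2$, the identity $\zeta^2(s)/\zeta(2s)=\sum_n 2^{\omega(n)}n^{-s}$ is correct, and the bookkeeping with $\sum_{n\le x}4^{\omega(n)}\ll x(\log x)^3$ and $\sum_{n\le x}4^{\omega(n)}/n\ll(\log x)^4$ via the factorization $\zeta^4(s)H(s)$ checks out. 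You are also right that the crude route through $\zeta^{-1}(1+2it)\ll\log t$ only yields $T(\log T)^5$, so a genuine saving of one logarithm is required.

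The gap is that the entire analytic content of the lemma is concentrated in the one step you defer: the claim that $\zeta^2(1/2+it)/\zeta(1+2it)$ admits an ``approximate functional equation,'' i.e.\ equals a Dirichlet polynomial of length $\asymp T$ in the coefficients $2^{\omega(n)}$ plus a dual tail plus a negligible remainder. The ratio $\zeta^2(s)/\zeta(2s)$ satisfies no functional equation, so the classical Riemann--Siegel/AFE machinery does not apply to it directly. What must actually be done is to approximate $1/\zeta(1+2it)$ by a finite polynomial $\sum_{m\le Y}\mu(m)m^{-1-2it}$ with an error controlled via the zero-free region (the lower bound $\zeta(1+2it)\gg 1/\log(|t|+2)$ that you invoke is necessary but nowhere near sufficient for this), multiply it into the approximate functional equation for $\zeta^2(s)$, and only then apply the mean value theorem to the resulting product polynomial, whose coefficients $\sum_{m^2a=n,\;m\le Y}\mu(m)\tau(a)$ must still be shown to have mean square of order $(\log x)^4$. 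Note that the tempting shortcut of dividing the AFE for $\zeta^2$ by $\zeta(1+2it)$ after the fact reintroduces a factor $(\log T)^2$ and destroys the saving, so the order of operations matters. Since you explicitly flag this step as ``the main technical obstacle'' without carrying it out, the proposal is an accurate road map rather than a proof; as written it does not establish the reduced estimate $\int_1^T\bigl|\zeta^2(1/2+it)/\zeta(1+2it)\bigr|^2\,dt\ll T(\log T)^4$, and hence does not establish the lemma.
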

\begin{proof} 
The proof of this result can be deduced from \cite[ Proposition 2]{JS} when $k=1$.  
\end{proof}
%%%%%%%%%%%%%%%%%%%%%%%%%%%%%%%%%%%%%%%%%%%%%%%%%%%%%%%%%%%%%%%%%%%%%%%%%%%%%%%%%%%%%%%%%%%%%%%%%%%%%%%%%%%%%%%%%%%%%%%%%%%%%%%%%%%%%%%%%%%%%%%%%%%%%%%%%
\section{Proof of Theorem~\ref{th1}} 
Our proof is similar in spirit to the proof of Theorem 1.1 in \cite{SL}. Both proofs are based on the residue theorem and the classical method to estimate the integrals. Here, we only prove our theorem for the function $\widetilde{D}_{s}(x)$. The proof of the function $ \widetilde{D}_{c}(x)$ is similar. Suppose that the double Dirichlet series of $s(m,n)$ 
$$
\alpha(s)=\sum_{m,n=1}^{\infty}\frac{s(m,n)}{(mn)^s}, 
$$
has abscissa of convergence $\sigma_c$.
Applying Riesz typical means, (see \cite[Chapter 5: (5.21) and (5.22)]{MV}), show that 
$$\alpha(s)=s^2\int_{1}^{\infty}\widetilde{D}_{s}(x)x^{-s-1}\, dx
$$
and that 
$$
\widetilde{D}_{s}(x)=\frac{1}{2\pi i}\int_{\sigma_0-i\infty}^{\sigma_0+i\infty}\alpha(s)\frac{x^s}{s^2}\, ds,
$$

when $x>0$ and $\sigma_0>\max(0, \sigma_c).$
Using Lemma \ref{lem10} with $\sigma_0=1+1/\log x$, we have
$$
\widetilde{D}_{s}(x):=\sum_{m n\leq x} s(m,n)\log \frac{x}{mn}=\frac{1}{2\pi i}\int_{\sigma_0-i\infty}^{\sigma_0+i\infty}\frac{\zeta^{4}(s)\zeta(2s-1)}{\zeta(2s)}\, \frac{x^{s}}{s^2}\, ds.
$$
Let $T\geq 1$ be a larger parameter, then the above equality can be rewritten as follows
$$
\widetilde{D}_{s}(x)=\frac{1}{2\pi i}\int_{\sigma_0-iT}^{\sigma_0+iT}\frac{\zeta^{4}(s)\zeta(2s-1)}{\zeta(2s)}\, \frac{x^{s}}{s^2}\, ds +\Ocal\left( \frac{x^{1+\epsilon}}{T}\right),
$$
where we used Lemma~\ref{lem21} to estimate 
\begin{align*}
\frac{1}{2\pi i}\int_{\sigma_{0}\pm iT}^{\sigma_{0} \pm i\infty}  \frac{\zeta^{4}(s)\zeta(2s-1)}{\zeta(2s)}\, \frac{x^{s}}{s^2}\, ds
&\ll \int_{T}^{+\infty}\frac{\left|\zeta^{4}(\sigma_{0} \pm it)\right|\, \left|\zeta(2\sigma_{0}-1 \pm i2t)\right|}{|\zeta(2\sigma_{0} \pm i2t)|}\, \frac{x^{\sigma_{0}}}{t^2}dt    \\
&\ll x^{\sigma_{0}+\varepsilon}\int_{T}^{+\infty} \frac{dt}{t^2} 
\ll  \frac{x^{\sigma_{0}+\varepsilon}}{T}. 
\end{align*}
We consider a rectangle $D$ in the $s$ plane with vertices at the points $1/2-iT$, $\sigma_0-iT$, $\sigma_0+iT$ and $1/2+iT$, where $T\geq 1$. Notice that our function 
$$\frac{\zeta^{4}(s)\zeta(2s-1)}{\zeta(2s)}\, \frac{x^{s}}{s^2}
$$
has a pole at $s=1$ of order $5.$ Then, by the residue theorem we find that
\begin{align}                                                     \label{SL-integral}
R(x,T)= I_1(x,T) - I_{2}(x,T) - I_{3}(x,T) + I_{4}(x,T),  
\end{align}
where   
\begin{align*}
&R(x,T) = \underset{s=1}{\rm{Res}}~\frac{\zeta^{4}(s)\zeta(2s-1)}{\zeta(2s)}\, \frac{x^{s}}{s^2},
\\ &
I_1(x,T) = \frac{1}{2\pi i}\int_{\sigma_0-iT}^{\sigma_0+iT}\frac{\zeta^{4}(s)\zeta(2s-1)}{\zeta(2s)}\, \frac{x^{s}}{s^2}\, ds, 
\\ &
I_{2}(x,T) = \frac{1}{2\pi i}\int_{\frac{1}{2}+iT}^{\sigma_0+iT}\frac{\zeta^{4}(s)\zeta(2s-1)}{\zeta(2s)}\, \frac{x^{s}}{s^2}\, ds, 
\\ &
I_{3}(x,T) = \frac{1}{2\pi i}\int_{\frac{1}{2}-iT}^{\frac{1}{2}+iT}\frac{\zeta^{4}(s)\zeta(2s-1)}{\zeta(2s)}\, \frac{x^{s}}{s^2}\, ds, 
\\ &
I_{4}(x,T) = \frac{1}{2\pi i}\int_{\frac{1}{2}-iT}^{\sigma_0-iT}\frac{\zeta^{4}(s)\zeta(2s-1)}{\zeta(2s)}\, \frac{x^{s}}{s^2}\, ds.  
\end{align*}
For calculating $R(x, T)$, we recall that the Laurent series expansion of the Riemann zeta function at $s=1$ is given by 
$$
\zeta(s)=\frac{1}{s-1}+\gamma_0+\gamma_1(s-1)+\gamma_2(s-1)^2+\gamma_3(s-1)^3+\cdots,
$$
where the constants $\gamma_n$ are often called the Stieltjes constants or generalized Euler constants (see \cite{I}). In particular, $\gamma_0 = 0.57721 56649\cdots$ is the well-known the Euler constant. Using the above expansion, the function $f(s)=\zeta^4(s)\zeta(2s-1)$ can be written as 
\begin{multline*}
f(s):=\zeta^4(s)\zeta(2s-1)=\frac{1/2}{(s-1)^5}+\frac{3\gamma_0}{(s-1)^4}+\frac{7\gamma_0^2-4\gamma_1}{(s-1)^3}\\+\frac{8\gamma_0^3-18\gamma_0\gamma_1+3\gamma_2}{(s-1)^2}
+\frac{\frac{9}{2}\gamma_0^4-30\gamma_0^2\gamma_1+11\gamma_1^2+13\gamma_0\gamma_2-\frac{5}{3}\gamma_3}{s-1}\\
+\left( \gamma_0^5-22\gamma_0^3\gamma_1+36\gamma_0\gamma_1^2+21\gamma_0^2\gamma_2-15\gamma_1\gamma_2-7\gamma_0\gamma_3+\frac{3}{4}\gamma_4\right)+
\Ocal(s-1).
\end{multline*}
Let $g(s,x)=x^ss^{-2}\zeta^{-1}(2s)$, then, we have 
\begin{align*}
R(x, T):=\underset{s=1}{\rm{Res}}~f(s)g(s, x)&=\frac{1}{2\times 4!} g^{(4)}(1,x)+\frac{4}{4!}\times 3\gamma_0\times g^{(3)}(1,x)\\&+\frac{12}{4!}\times \left( 7\gamma_0^2-4\gamma_1\right)\times g^{(2)}(1,x)\\&
+\frac{24}{4!}\times \left(8\gamma_0^3 -18\gamma_0\gamma_1+3\gamma_2\right)\times g^{\prime}(1,x) \\ &
+\frac{24}{4!}\times \left(\frac{9}{2}\gamma_0^4 -30\gamma_0^2\gamma_1+11\gamma_1^2+13\gamma_0\gamma_2-\frac{5}{3}\gamma_3\right)\times g(1,x),
\end{align*}
where $g^{(i)}(1,x)$ denotes the $i$-th derivative of the function $g(s,x)$ with respect to $s$ at $s=1$. By careful calculations, we find that 
\begin{align*}
g^{(4)}(1,x)&= \frac{6}{\pi ^2}x(\log x)^4-\frac{48}{\pi^4}(\pi^2+6\zeta^{\prime}(2))x(\log x)^3\\&+\frac{216}{\pi^6}\left(-4\pi^2\zeta^{\prime \prime}(2)+48\zeta^{\prime}(2)^2+8\pi^2\zeta^{\prime}(2)+\pi^4\right)x(\log x)^2\\&
+\frac{576}{\pi^8}\left(-432\zeta^{\prime}(2)^3+72\pi^2\zeta^{\prime}(2)\zeta^{\prime\prime}(2)-72\pi^2\zeta^{\prime}(2)^2-
\pi^6 \right) x\log x\\&
+\frac{576}{\pi^4}\left(-2\zeta^{(3)}(2)-9\zeta^{\prime}(2)+6\zeta^{\prime\prime}(2)\right) x\log x
\\&
+\frac{144}{\pi^{10}}\left(20736\zeta^{\prime}(2)^4+1728\pi^2\zeta^{\prime}(2)^2\left( 2\zeta^{\prime}(2)-3\zeta^{\prime\prime}(2)\right)+5\pi^8\right)x\\&
+\frac{6912}{\pi^{6}}\left(3\zeta^{\prime\prime}(2)^2+9\zeta^{\prime}(2)^2-4\zeta^{\prime}(2)(3\zeta^{\prime\prime}(2)-\zeta^{(3)}(2)) \right)x\\& 
+\frac{576}{\pi^{4}}\left(-\zeta^{(4)}(2)+4\zeta^{(3)}(2)-9\zeta^{\prime\prime}(2)+12\zeta^{\prime}(2) \right)x,
\end{align*}

\begin{align*}
g^{(3)}(1,x) &= \frac{6}{\pi ^2}x(\log x)^3-\frac{36}{\pi^4}\left(6\zeta^{\prime}(2)+\pi^2\right)x(\log x)^2\\&+\frac{108}{\pi^6}\left(48\zeta^{\prime}(2)^2+4\pi^2\left(2\zeta^{\prime}(2)-\zeta^{\prime\prime}(2) \right)+\pi^4\right)x\log x\\&
-\frac{144}{\pi^{8}}\left(432\zeta^{\prime}(2)^3+72\pi^2\zeta^{\prime}(2)\left( \zeta^{\prime}(2)-\zeta^{\prime\prime}(2)\right)+\pi^6 \right)x\\&
-\frac{144}{\pi^4}\left(9\zeta^{\prime}(2)-6\zeta^{\prime\prime}(2)+2\zeta^{(3)}(2) \right)x,
\end{align*}
\begin{align*}
g^{(2)}(1,x)&= \frac{6}{\pi ^2}x(\log x)^2-\frac{24}{\pi^4}\left(6\zeta^{\prime}(2)+\pi^2\right)x\log x
\\&
+\frac{36}{\pi^6}\left( 48\zeta^{\prime}(2)^2+ 4\pi^2\left(2\zeta^{\prime}(2)-\zeta^{\prime\prime}(2)\right)+\pi^4\right) x
\end{align*}
and that
\begin{align*}
g^{\prime}(1,x)= \frac{6}{\pi ^2}x\log x-\frac{12}{\pi^4}(6\zeta^{\prime}(2)+\pi^2) x, \quad \quad g(1, x)=\frac{x}{\zeta(2).}
\end{align*}
Thus, the function $R(x, T)$ becomes 
\begin{align}  
\label{residue}
R(x,T) = x\sum_{r=0}^{4}B_{r}(\log x)^r, 
\end{align}
where 
\begin{align*}
&B_{4}=\frac{1}{8\pi^2},  \\&
B_3=\frac{3\gamma_0-1}{\pi^2}-\frac{6}{\pi^4}\, \zeta^{\prime}(2),\\&
B_2=-\frac{18}{\pi^4}\zeta^{\prime\prime}(2)+\frac{216}{\pi^6}\zeta^{\prime}(2)^2+\frac{36}{\pi^4}(-3\gamma_0+1)\zeta^{\prime}(2)+\frac{3}{2\pi^2}\left(3+14\gamma_0^2-12\gamma_0-8\gamma_1\right),\\&
B_1=-\frac{5184}{\pi^8}\zeta^{\prime}(2)^3+\frac{864}{\pi^6}\zeta^{\prime}(2)\left(-\zeta^{\prime}(2)+3\gamma_0\zeta^{\prime}(2)+\zeta^{\prime\prime}(2) \right)\\&
-\frac{12}{\pi^4}\left(42\gamma_0^2\zeta^{\prime}(2)+\zeta^{\prime}(2)\left(9-24\gamma_1\right)-6\zeta^{\prime\prime}(2)+18\gamma_0\left(-2\zeta^{\prime}(2)+\zeta^{\prime\prime}(2)\right)+2\zeta^{(3)}(2)\right)\\&+\frac{6}{\pi^2}\left(8\gamma_0^3-14\gamma_0^2+9\gamma_0-18\gamma_0\gamma_1+8\gamma_1+3\gamma_2-2\right),\\&
B_0= \frac{1}{\pi^2}\left(15-96\gamma_0^{3}+27\gamma_0^{4}-72\gamma_1+66\gamma_1^{2}-18\gamma_0^2\left( -7+10\gamma_1\right)-36\gamma_2\right)\\& 
+\frac{1}{\pi^2}\left(6\gamma_0\left( -12+36\gamma_1+13\gamma_2\right)-10\gamma_3 \right)
-\frac{5184}{\pi^8}\zeta^{\prime}(2)^2\left( -2\zeta^{\prime}(2)+6\gamma_0\zeta^{\prime}(2)+3\zeta^{\prime\prime}(2)\right)\\&
-\frac{12}{\pi^4}\left( 48\gamma_0^3\zeta^{\prime}(2)+6\left( -2+8\gamma_1+3\gamma_2\right)\zeta^{\prime}(2)+9\zeta^{\prime\prime}(2)-24\gamma_1\zeta^{\prime\prime}(2)+42\gamma_0^2\left( -2\zeta^{\prime}(2)+\zeta^{\prime\prime}(2)\right)\right)\\&
+\frac{12}{\pi^4}\left( 6\gamma_0\left( 9\left( -1+2\gamma_1\right)\zeta^{\prime}(2)+6\zeta^{\prime\prime}(2)-2\zeta^{(3)}(2)\right)+4\zeta^{(3)}(2)-\zeta^{(4)}(2)\right)
\\&
+\frac{144}{\pi^6}\left( 42\gamma_0^2\zeta^{\prime}(2)^2+\left( 9-24\gamma_1\right)\zeta^{\prime}(2)^2-36\gamma_0\zeta^{\prime}(2)\left( \zeta^{\prime}(2)-\zeta^{\prime\prime}(2)\right)+3\zeta^{\prime\prime}(2)^2\right)\\&
+\frac{144}{\pi^6}\left(4\zeta^{\prime}(2)\left(-3\zeta^{\prime\prime}(2)+\zeta^{(3)}(2) \right)\right)+\frac{62208}{\pi^{10}}\zeta^{\prime}(2)^4.
\end{align*}

Again, we use Lemma~\ref{lem21} to estimate the function $I_2(x, T)$  
\begin{align*}
& I_{2}(x,T) =
\frac{1}{2\pi i}\left(\int_{\frac12}^{\frac34}+\int_{\frac34}^{\sigma_0} \right)
\frac{\zeta^{4}(\sigma+iT) \zeta(2\sigma-1+2iT)}{\zeta^{}(2\sigma+2iT) (\sigma+iT)^2} x^{\sigma+iT} d\sigma \\
&\ll 
\frac{\log^{}T}{T^2} \left(\int_{\frac12}^{\frac34}+\int_{\frac34}^{\sigma_0} \right)
|\zeta^{}(\sigma+iT)|^4 |\zeta(2\sigma-1+2iT)| x^{\sigma} d\sigma \\
&\ll 
T^{\frac12}(\log T)^6 \int_{\frac12}^{\frac34} \left(\frac{x}{T^{\frac83}}\right)^{\sigma}d\sigma
+ (\log T)^6 \int_{\frac34}^{\sigma_0} \left(\frac{x}{T^{2}}\right)^{\sigma} d\sigma \\
&\ll 
\left(\frac{x}{T^2} + \left(\frac{x^{}}{T^{2}}\right)^{\frac34}+\left(\frac{x^{}}{T^{\frac53}}\right)^{\frac12}\right)(\log T)^6.
\end{align*}
choosing $T=x$, we deduce that $I_{2}(x,T)\ll x^{-1/3}(\log x)^6. $
A similar argument shows that the function $I_4(x, T)$ is estimated by $x^{-1/3}(\log x)^6.$

Next we estimate $I_{3}(x,T)$. 
Using Lemmas \ref{lem21} and \ref{lem40} we find that
\begin{align*}                                       \label{vertical}
I_{3}(x,T) 
&= \frac{1}{2\pi}\int_{-T}^{T}\frac{\zeta^{4}(\frac12 +it)\zeta(2it)}{\zeta(1+2it)(\frac{1}{2}+it)^2}~x^{\frac12 +it}dt \nonumber \\
&\ll x^{\frac12} + x^{\frac12} \int_{1}^{T} \frac{|\zeta(\frac12 +it)|^4}{|\zeta(1+2it)|}\, \frac{|\zeta(2it)|}{t^2}dt \nonumber \\ 
&\ll x^{\frac12} 
  +  x^{\frac12} \sum_{k\leq\frac{\log T}{\log 2}}
     \int_{2^{k-1}}^{2^k} \frac{|\zeta(\frac12 +it)|^4}{|\zeta(1+2it)|}\, \frac{|\zeta(2it)|}{t^2}dt \nonumber \\ 
&\ll x^{\frac12} 
  +  x^{\frac12} \sum_{k\leq\frac{\log T}{\log 2}}\frac{1}{(2^{k})^{\frac32}}
     \int_{2^{k-1}}^{2^k} \frac{|\zeta(\frac12 +it)|^4}{|\zeta(1+2it)|} dt \nonumber \\
&\ll x^{\frac12} 
  +  x^{\frac12} \sum_{k\leq\frac{\log T}{\log 2}}
       \frac{1}{(2^{k})^{\frac32}}\cdot 2^{k}k^{4}  \nonumber \\
&\ll x^{\frac12}  + x^{\frac12} \sum_{k\leq k_{0}} \frac{k^{4}}{(2^{\frac12})^{k}}  
  +  x^{\frac12} \sum_{k_{0}<k\leq\frac{\log T}{\log 2}} 1    \nonumber  \\    
&\ll  x^{\frac12} \log T. 
\end{align*}
Combining the above results with \eqref{SL-integral}, we get the desired conclusion.
%%%%%%%%%%%%%%%%%%%%%%%%%%%%%%%%%%%%%%%%%%%%%%%%%%%%%%%%%%%%%%%%%%%%%%%%%%%%%%%%%%%%%%%%%%%%%%%%%%%%%%%
 
\section*{Acknowledgement}
The second author is supported by the Austrian Science Fund (FWF): Projects F5507-N26 and F5505-N26, which are part
of the Special Research Program  ``Quasi Monte
Carlo Methods: Theory and Applications''.
%%%%%%%%%%%%%%%%%%%%%%%%%%%%%%%%%%%%%%%%%%%%%%%%%%%%%%%%%%%%%%%%%%%%%%%%%%%%

\medskip\noindent {\footnotesize Isao Kiuchi: Department of Mathematical Sciences, Faculty of Science,
Yamaguchi University, Yoshida 1677-1, Yamaguchi 753-8512, Japan. \\
e-mail: {\tt kiuchi@yamaguchi-u.ac.jp}}

\medskip\noindent {\footnotesize Sumaia Saad Eddin: 
Institute of Financial Mathematics and Applied Number Theory, Johannes Kepler University, Altenbergerstrasse 69, 4040 Linz, Austria.\\
e-mail: {\tt sumaia.saad\_eddin@jku.at}}

\end{document}